\theoremstyle{plain}
\newtheorem{thm}{Theorem}[section]
\newtheorem{prop}[thm]{Proposition}
\newtheorem{lmm}[thm]{Lemma}
\newtheorem{prb}[thm]{Problem}
\theoremstyle{remark}
\def\II{\mathbb{I}}
\def\pmc#1{\setbox0=\hbox{#1}
    \kern-.1em\copy0\kern-\wd0
    \kern.1em\copy0\kern-\wd0}
\def\vep{\varepsilon}
\def\vp{\varphi}
\def\op{\operatorname}
\def\sm{\setminus}
\begin{document}

\bigskip

\title[On $2$-dimensional cell-like Peano continua]{On $2$-dimensional nonaspherical cell-like Peano continua: A simplified approach}


\bigskip

\bigskip

\author[K.~Eda]{Katsuya Eda} \address{School of Science and Engineering, Waseda University, Tokyo 169-8555, Japan} \email{eda@logic.info.waseda.ac.jp}
\author[U.~H.~Karimov]{Umed H. Karimov} \address{Institute of Mathematics, Academy of Sciences of Tajikistan, Ul. Ainy $299^A$, Dushanbe 734063, Tajikistan} \email{umedkarimov@gmail.com}
\author[D.~Repov\v{s}]{Du\v san Repov\v s } \address{Faculty of Mathematics and Physics, and Faculty of Education, University of Ljubljana, P.O.Box 2964, Ljubljana 1001, Slovenia} \email{dusan.repovs@guest.arnes.si}

\subjclass[2010]{Primary 54F15, 55N15; Secondary 54G20, 57M05}

\keywords{Noncontractible compactum, weak homotopy equivalence, trivial shape, Peano continuum, Snake cone, Alternating cone, asphericity, cell-like space, Topologist sine curve}

\begin{abstract}
We construct a functor $AC(-,-)$ from the category of path connected 
spaces $X$ with a base point $x$ to the category of simply
connected spaces. The following are the main results of the paper:
(i) If $X$ is a Peano continuum then
$AC(X,x)$ is a cell-like Peano continuum;
(ii) If $X$ is
$n-$dimensional
then $AC(X, x)$ is $(n+1)-$dimensional; and
(iii) For a path connected space $X$, $\pi_1(X,x)$ is trivial if and only
 if $\pi _2(AC(X, x))$ is trivial. 
As a corollary,  $AC(S^1, x)$ is a 2-dimensional nonaspherical cell-like
 Peano continuum.
\end{abstract}

\date{\today}

\maketitle

\section{Introduction}

{\it Peano} continua are those continua (i.e. connected metric compacta)
which are also locally connected. This classical topics of topology has
received renewed attention in recent years, in particular the
constructions of some surprising examples which provide
answers to some
very
interesting problems and conjectures.

 In our investigations of Peano continua we constructed in \cite{EKR FM}
 a new functor  from the category of all path connected
spaces $X$ with a base point $x$ and continuous mappings to the category
of all simply connected spaces,
which we named the
{\it Snake cone} and denoted by
$SC(-,-)$. 
When $X$ is a Peano continuum, $SC(X,x)$ is also a Peano continuum. 
We proved that in the case when $X=S^1$,
the space $SC(S^1, x)$ is a noncontractible simply
connected cell-like $2$-dimensional Peano continuum.

In a subsequent paper \cite{EKR TA} we showed that the space
$SC(S^1, x)$ is 
even nonaspherical, i.e.
$\pi_2(SC(S^1, x)) \neq 0$. 
Moreover, we showed that for every path connected space $X$, 
$\pi _1(X,x)$ is trivial if and only if $\pi _2(SC(X,x))$ is trivial
(cf. \cite{EKR TA} and
\cite{EKR Glasnik}). 

In the present paper we construct yet 
another new useful functor, called the
{\sl Alternating cone}, and we denote it by $AC(-,-)$.
We show that
while
$AC(X,x)$ shares all the 
properties of the space $SC(X,x)$ listed above, 
their verification is much easier. 
We also show that there exists a Peano continuum $(X,x)$ such that
the spaces $AC(X,x)$ and $SC(X,x)$ are not homotopy
equivalent. 
As an example how much easier the verifications are for the new functor,
we consider singular homology $H_{*}(AC(X,x))$ (cf. Theorem~\ref{thm:3}). 
\date{\today}

\section{Preliminaries}
We start by fixing some terminology and notations which will be
used in the sequel. All undefined terms can be found in \cite{S}
or
\cite{W}.

For any two points $A$ and $B$ of the plane $\mathbb{R}^2$, we
shall denote the {\it linear segment} connecting these points by
$[A,B]$. Next, by the {\it open} (resp. {\it half-open}) {\it interval} 
we shall mean the
linear segment without its end points (resp. without one of its end
points), and we shall denote it
by $(A, B)$ 
(resp.$[A, B)$ or $(A,B]$), as
usual. The unit segment of the real line $\mathbb{R}^1$, with the natural
topology, will be denoted by $\mathbb{I}.$ The point of the coordinate
plane $R^2$ with coordinates $a$ and $b$ will be denoted by $(a; b).$

Consider the points
$A = (0; 0), B = (0; 1), A_n = (1/n;
0), B_n = (1/n; 1)$ 
and let $L = [A, B],$ $L_{2n-1} =
[A_n, B_n],$ $L_{2n} = [B_n, A_{n+1}]$ be
segments in the
plane $\mathbb{R}^2$, for $n \in \mathbb{N} = \{1, 2, 3, \dots \}.$
The piecewise linear {\it Topologist sine curve}
$\mathcal{T}$ is a subspace of the plane $\mathbb{R}^2$ defined as the
union of $L_n$ and $L.$
For any topological space $X$ the cone $C(X)$ over $X$ is defined
as the quotient space $C(X) = (X\times \mathbb{I})/(X\times \{ 1\})$.

Suppose that $q_i: Z_i \to \mathbb{I}, \  i = 1, 2$, are  mappings of
topological spaces. Then the mapping $f:Z_1\to Z_2$ is said to be
{\it flat} with respect to the mappings $q_1$ and $q_2$ if for every two
points $a, b\in Z_1$ with $q_1(a) = q_1(b)$, the equality
$q_2(f(a)) = q_2(f(b))$ holds.
Homotopy $H:Z_1\times \mathbb{I} \to Z_2$ is said to be {\it flat} if
for each $t\in \mathbb{I}$ the mapping $H(-, t):Z_1 \to Z_2$ is a flat
mapping with respect to the mappings $q_1$ and $q_2$.

We recall the notion of 
the free $\sigma$-product of groups and a lemma from \cite{E:free}
(cf. the proof of \cite[Theorem 3.1]{EKR TA}). We only use it in the case 
when the index set is countable and we introduce a 
restricted version 
in the sequel. 
Let $(X_n,x_n)$ be pointed spaces for $n\in \mathbb{N}$ 
such that $X_m \cap X_n = \emptyset $, for $m \neq n.$ 
The underlying set of the
pointed space 
$(\widetilde{\bigvee}_{n\in \mathbb{N}}(X_n,x_n),x^*)$ 
is the union of all $X_n$'s obtained by identifying all $x_n$'s to a point
$x^*$ and the topology is defined by specifying the neighborhood base as 
follows (cf. \cite{BM}): 

\begin{itemize}
\item[(1)] If $x \in X_n \setminus \{x_n\}$, then the neighborhood base
of $x$ in $\widetilde{\bigvee}_{n\in \mathbb{N}}(X_n,x_n)$ is the one of $X_n$; and  
\item[(2)] The point $x^*$ has a neighborhood base, each element of which is
of the form:  $\bigcup _{n\ge m}(X_n,x_n) 
	   \cup \bigcup _{n < m}U_n$ 
where $m\in \mathbb{N}$ and each $U_n$
is an open neighborhood of $x_n$ in $X_n$ for $n < m$. 
\end{itemize}

\begin{lmm}\label{lmm:first}\cite[Theorem A.1]{E:free} 
Let $X_n$ be locally simply-connected and first countable at $x_n$ for
 each $n$. Then 
 $$\pi _1(\widetilde{\bigvee}_{n\in \mathbb{N}}(X_n,x_n),x^*)
\cong \pmc{$\times$}\, \, \; _{n\in \mathbb{N}}\pi _1(X_n,x_n).$$  
\end{lmm}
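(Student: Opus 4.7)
The plan is to exhibit an explicit canonical homomorphism $\Phi$ from the free $\sigma$-product to $\pi_1(\widetilde{\bigvee}_{n\in \mathbb{N}}(X_n,x_n),x^*)$ and prove bijectivity by constructing a geometric inverse at the level of loops, then verifying compatibility with homotopies via retractions onto finite subwedges.

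First I would define $\Phi$. An element of the free $\sigma$-product is represented by a word $W$ on a countable linearly ordered index set $\bar W$, assigning to each $i\in\bar W$ a nontrivial element of some $\pi_1(X_{n_i},x_{n_i})$, subject to the condition that $\{i\in\bar W: n_i=n\}$ is finite for every $n$. Pick representative loops $\alpha_i$ in $X_{n_i}$ and concatenate them over a partition of $\II$ whose order type matches $\bar W$. Continuity at limit parameters is forced by the finite-to-one condition combined with the neighborhood basis at $x^*$ from clause (2): only finitely many $\alpha_i$ can fail to lie in any given tail $\bigcup_{n\ge m}X_n$, so near a limit parameter the image is eventually contained in such a tail. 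First countability at $x_n$ lets one reparametrize each $\alpha_i$ to stay inside a prescribed neighborhood of $x_{n_i}$, which ensures that the infinite concatenation is continuous as a map into the weak wedge.

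For surjectivity, given a loop $\ell:\II\to\widetilde{\bigvee}_{n\in\mathbb{N}}(X_n,x_n)$, the set $\ell^{-1}(x^*)$ is closed and its complement is a countable disjoint union of open intervals $(a_j,b_j)$. Each image $\ell((a_j,b_j))$ is connected and avoids $x^*$, so it lies entirely in some $X_{n_j}\setminus\{x_{n_j}\}$, and $\ell|_{[a_j,b_j]}$ becomes a based loop in $X_{n_j}$. The neighborhood basis at $x^*$ guarantees that for each $n$ only finitely many of these subintervals map into $X_n$, so the assignment $j\mapsto[\ell|_{[a_j,b_j]}]$, indexed in the natural order inherited from $\II$, is an honest word in the free $\sigma$-product. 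This produces a preimage of $[\ell]$ under $\Phi$.

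For injectivity, the main tool is the family of retractions $r_k:\widetilde{\bigvee}_{n\in \mathbb{N}}(X_n,x_n)\to X_1\vee\cdots\vee X_k$ that collapse every $X_m$ with $m>k$ to $x^*$; each is continuous because the tail $\bigcup_{n>k}X_n\cup\{x^*\}$ is a single neighborhood of $x^*$ mapped to $x^*$. On the finite wedge $X_1\vee\cdots\vee X_k$, local simple-connectivity at each $x_n$ validates the standard van Kampen identification of $\pi_1$ with the free product $\pi_1(X_1)*\cdots*\pi_1(X_k)$. The composition $\pi_1(r_k)\circ\Phi$ sends a word $W$ to the truncation $W|_{\{1,\ldots,k\}}$ read in this free product. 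Hence, if $\Phi(W)=1$, then every finite truncation of $W$ is trivial in the corresponding free product, which by the structure theorem for the free $\sigma$-product (the reduction process is stable under projection to finite subproducts) forces $W$ itself to be trivial.

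The principal obstacle is precisely this last implication: that a reduced infinite word with all finite truncations trivial must itself be empty. It is not a purely algebraic observation, but rests on the fact that no nontrivial infinite cancellation can occur, which in turn uses both the local simple-connectivity hypothesis (to rule out Hawaiian-earring style obstructions in the finite subwedges) and first countability at the $x_n$ (to control the reparametrization in the construction of $\Phi$). Once this combinatorial lemma from \cite{E:free} is in hand, the verification that $\Phi$ is well-defined on homotopy classes and that the surjectivity map above is a two-sided inverse is a routine chain of checks.
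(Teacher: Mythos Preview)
The paper does not prove this lemma at all: it is quoted verbatim as \cite[Theorem A.1]{E:free} and used as a black box, so there is no argument in the present paper against which to compare your proposal.

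That said, your outline is close to the standard proof in \cite{E:free}, but one step is misstated. In the surjectivity paragraph you assert that ``for each $n$ only finitely many of these subintervals map into $X_n$,'' and you attribute this to the neighborhood basis at $x^*$. This is false as written: a continuous loop can dip into a fixed $X_n$ infinitely often, provided the excursions shrink toward $x_n$. What \emph{is} true is that all but finitely many of the resulting subloops in $X_n$ are contained in an arbitrarily small neighborhood of $x_n$, and it is precisely the hypothesis that $X_n$ is locally simply connected at $x_n$ that makes those small subloops null-homotopic. Only after discarding these trivial letters does the word satisfy the finiteness condition defining the free $\sigma$-product. Your text invokes local simple connectedness only in the injectivity step; it is needed here too, and first countability is what lets you carry out the discarding coherently. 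With that correction the sketch is sound, though the well-definedness of $\Phi$ on homotopy classes and the ``reduced infinite word with all finite truncations trivial must be empty'' step are genuine work in \cite{E:free}, not quite the routine checks you suggest.
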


Let $G$ be a group. A {\it commutator}  
is an element $[g_1,g_2]=g_1g_2g_1^{-1}g_2^{-1}$ 
where $g_1,g_2\in G$. The normal subgroup
$G^{\prime}$ of $G$ which is generated by all commutators is
called the {\it commutator subgroup} of $G$.

Recall that the {\it commutator length} $cl(g)$ of $g\in G^{\prime}$ is
defined as the minimal number of the commutators of  $G$ whose
product is equal to $g$ (cf. \cite{EKR cl}). 
We note that $cl(e) = 0$ for the identity element $e$ of the group
$G$. If $\varphi :G \to H$ is a homomorphism of a group $G$ to a group
$H$, then for every $g\in G^{\prime},$
\begin{equation}\label{equation:cl(phi)}
cl(\varphi (g)) \leq cl(g).
\end{equation}
For every path connected space X, the fundamental group $\pi_1(X,
x)$ is independent of the choice of the base point $x$
and hence we can
simply write $\pi _1(X)$. Finally, recall the isomorphism:

\begin{equation}\label{equation:H_1}
H_1(X)\cong \pi_1(X)/\pi_1^{\prime}(X).
\end{equation}

\section{Constructions and main results}

The Snake cone functor $SC(-, -)$ is defined as follows. For every compact
space $X$ with the base point $x$, the space $SC(X, x)$ is the
quotient space of the topological sum $(\mathcal{T}\times X)
\bigsqcup \mathbb{I}^2$ via the identification of the points
$(t,x)\in \mathcal{T}\times X$ with $t\in \mathcal{T} \sm L \subset
\mathbb{I}^2$ and the identification of each set $\{t\}\times
X$ with the
point $t$, for every $t\in L$ (cf.
\cite{EKR FM}).

\begin{figure}[!htb]
\centering
\includegraphics[scale=1.0]{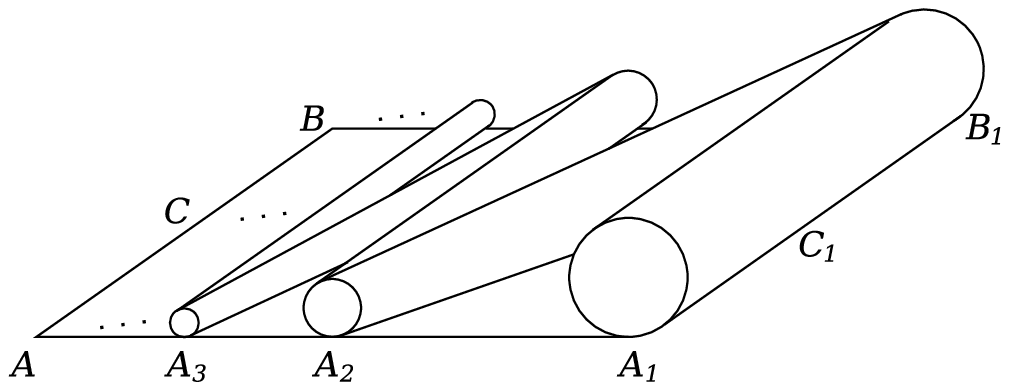}
\caption{$SC(S^1)$} \label{fig:fig-1}
\end{figure}

Let us now define the Alternating cone functor $AC(-, -).$ 
Let $(X,x)$ be a pointed space and let $(X_n,x_n)$ be its copies. 
Connect $x_n$ and $x_{n+1}$ by an arc for each $n\in\mathbb{N}$ 
and add a new point so that 
a neighborhood of the point contains almost all $X_n$'s and  
almost all attached arcs. Let $Y$ be this space. 

Let $AC(X, x)$ be the quotient space of $Y\times \mathbb{I}$ in which the
sets $\{ 0\}\times X_n$ are identified with $(0,x_n)$ for all odd $n$ and 
the sets $\{ 1\}\times X_n$ are identified with $(1,x_n)$ for all even $n$. 

Since $x_n$ and the attached arcs converge to the new point in $Y$, 
$AC(X, x)$ can be illustrated as in Figure 2. 
The identifited points will be denoted by $A_{2n} = (1/2n;
0)$ and $B_{2n-1} =(1/(2n-1); 1)$.  
Let $p:AC(X, x) \to \mathbb{I}^2$ be the natural projection and 
define
$p(u) = (p_1(u); p_2(u))$. 
We let $C_n = (1/n; 1/2)$,
for $n\in \mathbb{N}$ and $C=(0;1/2)$. 
\begin{figure}[!htb]
\centering
\includegraphics[scale=1.0]{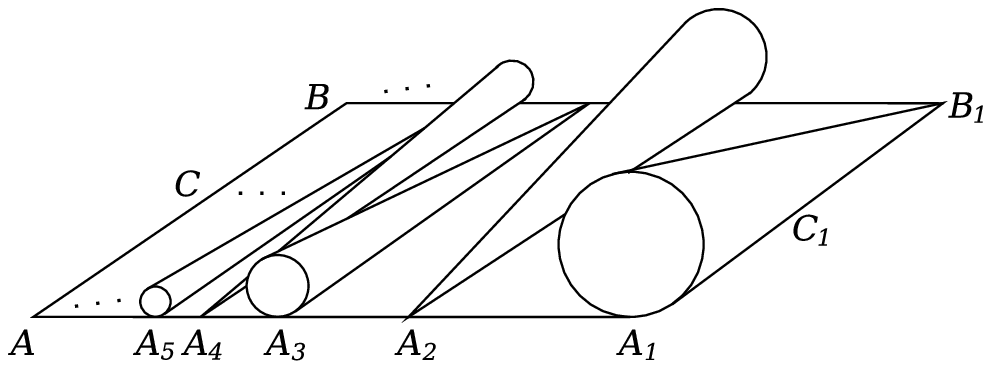}
\caption{$AC(S^1)$} \label{fig:fig-2}
\end{figure}

\begin{prop}\label{prop:basepoint} 
Let $X$ be a path connected space and $x,y\in X$. Then 
$SC(X,x)$ and $SC(X,y)$ are homotopy equivalent. Similarly, 
$AC(X,x)$ and $AC(X,y)$ are also homotopy equivalent. 
\end{prop}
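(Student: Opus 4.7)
Since $X$ is path-connected, pick a path $\ga: \II \to X$ with $\ga(0) = x$, $\ga(1) = y$, and denote by $\ga_n$ the corresponding path in the copy $X_n$. I will sketch the $AC$ case; the $SC$ case is analogous, with $\mathcal{T}$ playing the role of the space $Y$. The plan is to construct an explicit homotopy equivalence $AC(X,x) \simeq AC(X,y)$ by sliding every structural feature along the paths $\ga_n$.

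Concretely, I would define a map $f : AC(X,x) \to AC(X,y)$ that is the identity on each copy $X_n$ and extends to a map of cones $C(X_n) \to C(X_n)$ sending the apex $(0,x_n)$ (for odd $n$) or $(1,x_n)$ (for even $n$) to the apex $(0,y_n)$ or $(1,y_n)$ of the corresponding cone in $AC(X,y)$; on each arc $\al_n$ in $Y$ connecting $x_n$ to $x_{n+1}$, I would send $\al_n$ to the concatenation $\ga_n \cdot \al'_n \cdot \ga_{n+1}^{-1}$ in $AC(X,y)$ (a path from $x_n$ through $y_n$, along the new arc $\al'_n$, through $y_{n+1}$, back to $x_{n+1}$), reparameterized through the cone structure so that its endpoints land at the new apices. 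The reverse map $g: AC(X,y) \to AC(X,x)$ is defined symmetrically using $\ga^{-1}$. Then both $fg$ and $gf$ are the identity on each $X_n$, and on each arc they equal the original arc padded with null-homotopic loops of the form $\ga_n\ga_n^{-1}$ and $\ga_{n+1}^{-1}\ga_{n+1}$ at the endpoints; these loops contract inside the cones over $X_n$ and $X_{n+1}$, yielding cone-by-cone homotopies rel endpoints to the identity.

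The main obstacle will be verifying continuity of the constructed maps and the cone-by-cone homotopies at the ``new point'' --- the accumulation point in $Y$, and its image $\{*\} \times \II$ in $AC$. This follows from the neighborhood base condition (2) in the definition of $\widetilde{\bigvee}_{n \in \mathbb{N}}(X_n, x_n)$: every neighborhood of the accumulation point contains all $X_n$ together with their attached arcs for sufficiently large $n$, which automatically absorbs any perturbation (in particular, any detour along $\ga_n$) occurring inside $X_n$. Combined with the evident continuity away from the accumulation point and the fact that patching the cone-by-cone homotopies poses no local difficulty (each cone interacts with only two adjacent arcs), this establishes the required homotopy equivalence $AC(X,x) \simeq AC(X,y)$; the same argument, with $\mathcal{T}$ in place of $Y$ and the identifications along $L$ in place of the cone identifications, handles $SC(X,x) \simeq SC(X,y)$.
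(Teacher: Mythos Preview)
Your argument is correct, but it takes a different route from the paper's. The paper first observes (by a deformation on the $\mathbb{I}^2$ direction) that $SC(X,x)\simeq SC(X^*,0)$, where $X^*$ is $X$ with a whisker $\mathbb{I}$ attached at $x$ and the new base point is the free end $0$ of the whisker; it then writes down an explicit pointed homotopy equivalence $(X^*,0)\simeq (Y^*,0)$ using the path from $x$ to $y$, and finally invokes functoriality of $SC(-,-)$ on pointed maps. In other words, the paper moves the base-point change entirely into a pointed equivalence of the \emph{input} space, so that nothing needs to be checked on the total space beyond the obvious functoriality. Your approach instead builds the maps and homotopies directly on $AC(X,x)$ and $AC(X,y)$, sending each connecting arc to a detoured arc and then cancelling the resulting $\ga _n\ga _n^{-1}$ loops. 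This is more hands-on and requires the continuity check at the accumulation point that you carry out; the paper's whisker reduction sidesteps that check because the induced map on $SC$ (or $AC$) is simply $(\vp\text{ or }\psi)\times\op{id}$ and hence automatically continuous and compatible with the identifications. Both arguments are valid; the paper's is cleaner conceptually, while yours is more self-contained and avoids the intermediate space $X^*$.

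One small clarification you might add for the reader: the cone identifications in $AC(X,x)$ and $AC(X,y)$ collapse exactly the same sets $X_n\times\{0\}$ (odd $n$) and $X_n\times\{1\}$ (even $n$), independently of the base point, so your map on $Y\times\mathbb{I}$ descends to the quotients automatically; the phrase ``reparameterized through the cone structure so that its endpoints land at the new apices'' is then unnecessary and slightly misleading, since the apices coincide.
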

\begin{proof}
We shall only prove this for $SC(X,x)$ and $SC(X,y)$, since the
proof for $AC(X,x)$ and $AC(X,y)$
is similar. 
Let $X$ be a path connected space with $x\in X$, and $X^*$  the 
space defined in Theorem~\ref{thm:3}. Also, let $Y^*$ be the space
 obtained by replacing $x$ with $y$. 
By a deformation on $\mathbb{I}^2$, $SC(X,x)$ is homotopy
 equivalent to $SC(X^*,0)$. Hence 
it suffices to establish
a homotopy equivalence
 between $SC(X^*,0)$ and $SC(Y^*,0)$. 
 
 Let $f:\mathbb{I}\to X$ be a path
 such that $f(0) = x$ and $f(1) = y$. 
Define $\vp: X^* \to Y^*$ and $\psi: Y^* \to X^*$ by: 
\[
  \vp (u) =
\begin{cases}
 u &  \quad \mbox{ for }u\in X  \\
 2s &\quad \mbox{ for }s\in [0,1/2] \\
 f(2-2s)&\quad \mbox{ for }s\in [1/2, 1]
\end{cases}
\]
\[
 \psi (u) = 
\begin{cases}
u & \quad \mbox{ for }u\in X  \\
2s & \quad \mbox{ for }s\in [0,1/2] \\
f(2s -1) &\quad \mbox{ for }s\in [1/2, 1].
\end{cases}
\]
Then it is easy to see that $\vp$ and $\psi$ induce a homotopy
 equivalence between $X^*$ and $Y^*$ which fixes $0$. 
This homotopy equivalence then
induces a homotopy equivalence between
 $SC(X^*,0)$ and $SC(Y^*,0)$. 
\end{proof}

We are mainly interested in the homotopy types of $SC(X,x)$ and $AC(X,x)$
for the class of path connected spaces $X$.
Since by Proposition~\ref{prop:basepoint} the
choice of the base point $x$ does not affect the homotopy types of 
$SC(X,x)$ and $AC(X,x)$, we can
simply write $SC(X)$ or $AC(X)$ when the
choice of the base point does not matter. 

\begin{thm}\label{thm:1}
If  $X$ is a continuum then $AC(X)$ is a cell-like space. If ${\rm
dim}\ X = n$ then ${\rm dim}\ AC(X) = n+1.$ If $X$ is a Peano
continuum, then $AC(X)$ is also a Peano continuum. For any
continuum $X$, the space $AC(X)$ is  simply connected.
\end{thm}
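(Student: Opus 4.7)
The plan is to verify the four assertions in turn. First I would check that the auxiliary space $Y$ is itself a Peano continuum: compactness follows because the $X_n$'s together with the connecting arcs form a null sequence converging to $y^*$ in the prescribed topology, analogous to $\widetilde{\bigvee}_n X_n$; connectedness is immediate from the arcs; and local connectedness at $y^*$ holds because every basic neighborhood $\bigcup_{n\geq m}X_n\cup\bigcup_{n\geq m-1}a_n$ is already connected (being a union of connected Peano continua each meeting the next). Then $Y\times\mathbb{I}$ is a Peano continuum, and $AC(X,x)$ is its quotient by a closed upper-semicontinuous decomposition whose only non-degenerate members are the connected compacta $X_n\times\{0\}$ (for odd $n$) and $X_n\times\{1\}$ (for even $n$); by standard quotient theorems for such decompositions $AC(X,x)$ is a Peano continuum as well.

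For the dimension, the countable sum theorem applied to the closed cover of $AC(X,x)$ by the columns $C_n$ (each a cone on $X_n$, hence of dimension $n+1$), the connecting strips $a_n\times\mathbb{I}$ (of dimension $2$), and the image of $\{y^*\}\times\mathbb{I}$ (of dimension $1$) gives $\dim AC(X,x)\leq n+1$ whenever $n\geq 1$. The reverse inequality is immediate because a single column is a cone on $X_n$ and a cone on an $n$-dimensional compactum has dimension exactly $n+1$ (it contains the open set $X_n\times(0,1)$).

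For simple connectedness, the main observation is that each column $C_n$ is a cone on $X_n$ and is therefore contractible. The plan is to construct a map $\Phi\colon AC(X,x)\to\widetilde{\bigvee}_{n\in\mathbb{N}}(C_n',v_n)$, where $C_n'$ is the quotient of $C_n$ by its axis arc $\{x_n\}\times\mathbb{I}$ (still contractible), obtained by collapsing the "spine" $S\subset AC(X,x)$ built from the image of $\{y^*\}\times\mathbb{I}$, the connecting strips $a_n\times\mathbb{I}$, and the axis arcs $\{x_n\}\times\mathbb{I}$. The subspace $S$ is a tree-like Peano subcontinuum that is homotopically trivial, and $\Phi$ has contractible point-inverses, so one checks directly that it induces an isomorphism on $\pi_1$. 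Lemma~\ref{lmm:first} then yields
\[
\pi_1(AC(X,x))\cong \pmc{$\times$}\, \, \; _{n\in\mathbb{N}}\pi_1(C_n')=\{e\}
\]
since each $C_n'$ is contractible. The hard part will be this step: a loop may oscillate through infinitely many columns, which is precisely what the $\sigma$-product structure of Lemma~\ref{lmm:first} is designed to handle.

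For cell-likeness, the strategy is to realize $AC(X,x)$ as an inverse limit of contractible Peano continua. Let $P_m$ be the quotient of $AC(X,x)$ obtained by collapsing each $X_n$ with $n>m$, together with the image of $\{y^*\}\times\mathbb{I}$, to individual points. Then $P_m$ is a finite tree-like gluing of the cones on $X_1,\ldots,X_m$ along arcs plus a residual arc, hence contractible by an easy induction using van Kampen-type arguments on gluings along contractible intersections. The natural bonding maps $P_{m+1}\to P_m$ identify $AC(X,x)$ with $\varprojlim P_m$, so its shape is trivial and therefore $AC(X,x)$ is cell-like.
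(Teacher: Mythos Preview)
Your treatments of the Peano-continuum, dimension, and cell-likeness assertions are sound and close in spirit to the paper's, though you supply more detail: the paper simply notes that $AC(X)$ is a quotient of $Y\times\mathbb{I}$ with $\dim Y=\dim X$ (so $\dim AC(X)=n+1$), that local connectedness is inherited by the quotient, and that $AC(X)$ is an inverse limit of contractible compacta, without writing down an explicit bonding system.

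The simple-connectedness argument, however, has a genuine gap. You want to collapse the spine $S$ and apply Lemma~\ref{lmm:first} to the resulting wedge of contractible pieces $C_n'$. The problematic step is ``$\Phi$ has contractible point-inverses, so one checks directly that it induces an isomorphism on~$\pi_1$.'' Contractible point-inverses do not suffice: a cell-like map between compacta need not be a weak equivalence unless both sides are ANRs, and for a general continuum $X$ the space $AC(X)$ is typically not an ANR. Equivalently, the inclusion $S\hookrightarrow AC(X)$ need not be a cofibration, since $X$ need not be locally contractible at $x$ and hence a neighbourhood of the axis $\{x_n\}\times\mathbb{I}$ in the column $C_n$ need not retract onto that axis. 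Without the homotopy extension property there is no general mechanism making $AC(X)\to AC(X)/S$ a $\pi_1$-isomorphism, and ``checking directly'' would amount to proving simple connectedness of $AC(X)$ from scratch. (A secondary point: invoking Lemma~\ref{lmm:first} also requires each $C_n'$ to be locally simply connected at $v_n$; this does hold, via a tube-lemma argument in the cone, but it needs to be said.)

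The paper sidesteps all of this with a short van Kampen argument using the \emph{horizontal} decomposition $U=p_2^{-1}([0,1))$, $V=p_2^{-1}((0,1])$. These and $U\cap V$ deformation retract onto the levels $p_2^{-1}(\{0\})$, $p_2^{-1}(\{1\})$, $p_2^{-1}(\{1/2\})$ respectively, and the key observation is that every element of $\pi_1(U)$ (a loop supported in the even $X_{2k}$'s) lifts to $\pi_1(U\cap V)$ and is sent to the identity by $i_{V*}$, since the even columns are coned off at the top; symmetrically for $\pi_1(V)$. Hence the amalgamated product $\pi_1(U)\ast_{\pi_1(U\cap V)}\pi_1(V)$ collapses. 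This uses only the open-cover form of van Kampen and no local hypotheses on $X$ whatsoever, which is exactly why the alternating-cone construction is simpler to handle than $SC(X)$.
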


\begin{thm}\label{thm:2}
If $X$ is a noncontractible compact space, then $AC(X)$ is also a
noncontractible compact space.
\end{thm}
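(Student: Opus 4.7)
The compactness of $AC(X,x)$ is routine: the base $Y$ is compact since it is a countable union of shrinking copies $X_n$ of the compactum $X$ together with connecting arcs, accumulating at a single limit point $x^*$, so $Y\times\II$ is compact and $AC(X,x)$, a Hausdorff quotient, is itself compact.

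For the noncontractibility, I would argue by contradiction. Suppose that $H\colon AC(X)\times\II\to AC(X)$ is a contraction with $H(u,0)=u$ and $H(u,1)=p$. For each $n$, the natural embedding $\iota_n\colon X\hookrightarrow AC(X)$ of $X$ as the uncollapsed copy $X_n$ is then null-homotopic via $K_n:=H\circ(\iota_n\times\operatorname{id}_{\II})$. The goal is to manufacture from the family $\{K_n\}$ a genuine contraction of $X$ within $X$ itself, contradicting the hypothesis.

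The key input is the interaction between $H$ and the wild structure at $x^*$. Because $H$ is uniformly continuous on the compact metric space $AC(X)\times\II$ and $\operatorname{diam}(X_n)\to 0$ in $AC(X)$ as $n\to\infty$, for every $\varepsilon>0$ there exists $N$ such that for all $n\ge N$ the image $K_n(X\times\II)$ lies in the $\varepsilon$-neighborhood of the single arc $\gamma^*:=H(\{(x^*,e)\}\times\II)\subseteq AC(X)$, where $e\in\{0,1\}$ is the appropriate uncollapsed level. For sufficiently small $\varepsilon$ this neighborhood meets only finitely many of the cone cells $P_m$ and rectangles $A_m\times\II$ of the zig-zag, so it admits a nice piecewise description. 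Combining this with the natural retraction $p_m\colon P_m\to\{x_m\}\times\II$ of each cone $P_m\cong C(X)$ onto its central spine, given explicitly by $(y,t)\mapsto(x_m,t)$ (which descends to $AC(X)$ because the collapsed slice $X_m\times\{0\}$ or $X_m\times\{1\}$ is mapped to a single point), I would transport $K_n$ back through the local cell structure to a null-homotopy of $\operatorname{id}_{X_n}\cong\operatorname{id}_X$ inside $X_n$ itself, yielding the desired contradiction.

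The main obstacle is making this transport continuous and compatible with all identifications in $AC(X)$. In particular, the deformation retraction of the $\varepsilon$-neighborhood of $\gamma^*$ onto a one-dimensional ``tree-like'' core must fit together coherently with the cone retractions $p_m$ along the common gluing arcs, and the pulled-back homotopy must begin at $\operatorname{id}_X$, end at a constant, and stay inside $X_n$. I expect this to be tractable precisely because of the explicit alternating zig-zag description of $AC(X)$, which is what the authors indicate makes the functor $AC$ easier to analyze than the Snake cone $SC$.
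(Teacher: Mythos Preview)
Your compactness paragraph and the uniform-continuity step are fine: for $n$ large the track $K_n(X\times\II)$ does lie in a thin tube around the path $\gamma^*$. The gap is in the ``transport'' step. The spine retraction $p_m\colon(y,t)\mapsto(x_m,t)$ that you write down collapses the uncollapsed slice $X_m\times\{e\}$ to the single point $(x_m,e)$. Hence after composing with these retractions the homotopy no longer begins at $\op{id}_{X_n}$ but at a constant, and no contradiction results. You list ``the pulled-back homotopy must begin at $\op{id}_X$'' among the obstacles to overcome, but this is precisely what cannot be arranged: to keep the initial map equal to the identity on $X_n$ you would need a retraction of a neighborhood of $\gamma^*$ \emph{onto} $X_n$, and a cone $C(X)$ never retracts onto its base (for $X=S^1$ this would retract a disk onto its boundary circle). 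More conceptually, every $\iota_n$ is \emph{already} null-homotopic in $AC(X)$ because it bounds a cone; the existence, or even the smallness, of $K_n$ therefore carries no information about the contractibility of $X$.

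The paper's argument does not try to push the contraction into a single copy of $X$. It restricts $H$ to a two-cone block $AC_n(X)=p^{-1}([A_{2n},B_{2n}]\cup[A_{2n+1},B_{2n+1}])$, one cone collapsed at height $0$ and the other at height $1$, and works with the height projection $p_2$. After the same compactness step you describe, no level slice of $AC_n(X)$ is stretched across both heights $0$ and $1$; Lemma~\ref{lmm:flat} then replaces $H$ by a \emph{flat} homotopy, one whose time-slices respect $p_2$-levels. The contradiction (Lemmas~\ref{lmm:crucial} and~\ref{lmm:noncontr}) comes from tracking whether the restriction of $H(-,t)$ to a level slice $M_s\cap AC_n(X)$ is homotopic to the identity inside $p_2^{-1}((0,1))$: the alternating collapses at $s=0$ and $s=1$ force incompatible behaviour. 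The noncontractibility of $X$ enters through this two-cone interaction, not through any single embedded copy of $X$.
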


\begin{thm}\label{thm:3}
Let $X$ be a path connected space with $x\in X$ and $X^*$  the 
quotient space of the disjoint sum of the unit interval $\mathbb{I}$ and $X$,
 by identifying $1$ of $\mathbb{I}$ and $x$. Let $(X_i,x_i)$ be copies
 of $(X^*, 0)$. 

It then follows that
for any natural number $n$, we have the isomorphism:
$$H_{n+1}(AC(X))\cong H_n(C(\tilde{\bigvee}_{i\in \mathbb{N}}(X_{2i-1},x_{2i-1}))\vee
C(\tilde{\bigvee}_{i\in \mathbb{N}}(X_{2i},x_{2i}))),$$
where the attaching
 points of
 the two cones are the identification points $x_{2i-1}$ and
 $x_{2i}$. 
\end{thm}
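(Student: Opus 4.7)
\emph{Proof plan.} My approach is a Mayer--Vietoris argument with a geometrically natural decomposition of $AC(X)$. Let $U$ be the subspace consisting of the upper half $\{(y,t) \in AC(X) : t \geq 1/2\}$ together with all the odd cones $C(X_{2i-1})$, whose apices $A_{2i-1}$ converge to the bottom wild point $A_\infty$. Symmetrically, let $V$ consist of the lower half together with all the even cones, whose apices converge to the top wild point $B_\infty$. Then $U \cup V = AC(X)$, after a minor thickening to ensure openness.

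The first key step would be to show that each of $U$ and $V$ is contractible. For $U$, I would construct a deformation retraction onto $A_\infty$ by combining a vertical push-down in the upper half with the radial collapse of each odd cone to its apex; for points on the arcs between consecutive $x_n$'s (which are not in any cone), one routes them along the chain to the nearest odd vertex before descending its attached cone. The convergence $A_n \to A_\infty$ together with the $\sigma$-wedge neighborhood base ensures continuity at the wild limit. A symmetric argument shows $V$ retracts to $B_\infty$.

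Next, I would analyze the intersection $U \cap V$, which consists of the middle slice $Y \times \{1/2\}$ together with the lower halves of the odd cones (truncated cones going down to $A_{2i-1}$) and the upper halves of the even cones (going up to $B_{2i}$). I would show by an explicit deformation retraction that $U \cap V$ is homotopy equivalent to $G := C(\tilde{\bigvee}_i X^*_{2i-1}) \vee C(\tilde{\bigvee}_i X^*_{2i})$: the lower odd-cone halves, together with the relevant portion of the middle slice carrying the chain/arc structure, retract onto $C(A)$ with apex $A_\infty$; the upper even-cone halves produce $C(B)$ with apex $B_\infty$; and the two cones are wedged at the common point arising from the middle slice's wild-wedge structure, matching the attaching points $x_{2i-1}$ and $x_{2i}$ of the statement.

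With $U$ and $V$ contractible and $U \cap V \simeq G$, the Mayer--Vietoris long exact sequence degenerates to the claimed isomorphism
$H_{n+1}(AC(X)) \cong H_n(U \cap V) \cong H_n(G).$
The main technical obstacle I expect is the contractibility of $U$ (and $V$): naively one can only retract $U$ onto its top slice $Y_{\text{top}} \simeq \tilde{\bigvee}_i X^*_{2i-1} = A$, which is not contractible in general. The essential extra structure is provided by the odd cones, serving as ``extended arcs'' from $A$ down to $A_\infty$; arranging the combined retraction so that it is continuous at $A_\infty$ and at each interface vertex $x_n$ requires a careful coordination with the $\sigma$-wedge topology. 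Correctly identifying $U \cap V$ with $G$ (rather than naively concluding that $U \cap V$ is contractible by coning off each $X_n$ separately) is the analogous subtlety on the intersection side, reflecting precisely the Griffiths-type non-vanishing phenomenon that makes the theorem non-trivial.
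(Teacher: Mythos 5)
Your decomposition does not work: both of its key claims fail, and for the same underlying reason. Take $X=S^1$. Up to homotopy your $U$ is the shrinking wedge $\widetilde{\bigvee}_i(X_{2i-1},x_{2i-1})$ (the top slice, a Hawaiian earring) with a separate, non-shrinking cone attached over each \emph{individual} summand; below level $1/2$ these cones are pairwise disjoint and carry none of the connecting arcs. A Mayer--Vietoris computation for the cover of $U$ by a thickened top slab $P\simeq\widetilde{\bigvee}_iX_{2i-1}$ and the union $Q$ of the lower cone halves (a disjoint union of contractible pieces, with $P\cap Q\simeq\bigsqcup_iX_{2i-1}$) shows that $H_1(U)$ is the quotient of $H_1(\widetilde{\bigvee}_iX_{2i-1})$ by the subgroup generated by the classes of the individual circles; since $H_1$ of the Hawaiian earring surjects onto $\mathbb{Z}^{\mathbb{N}}$ with the $i$-th circle mapping to the $i$-th basis vector, $H_1(U)$ surjects onto $\mathbb{Z}^{\mathbb{N}}/\mathbb{Z}^{(\mathbb{N})}\neq 0$. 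So $U$ is not contractible, and no rerouting can repair the homotopy you describe. (Concretely, your contraction already breaks on the even copies: a point of $X_{2i}\times\{1/2\}$ lies neither on an arc nor in an odd cone, and sending it to $x_{2i}$ ``along the chain'' would require contracting $X_{2i}$ inside itself.) The same confusion invalidates the identification $U\cap V\simeq C(\widetilde{\bigvee}_iX_{2i-1})\vee C(\widetilde{\bigvee}_iX_{2i})$: a shrinking wedge whose summands are coned off one at a time is \emph{not} the cone over the shrinking wedge --- in the latter an infinite concatenation of loops from distinct summands is null-homotopic, in the former it is not. This is exactly the Barratt--Milnor/Eda--Kawamura anomalous-homology phenomenon \cite{BM,EK:aloha} that makes the theorem nontrivial, and the wedge of cones in the statement does not occur as a subspace of $AC(X)$ in the way you propose.

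The paper's proof avoids contractibility altogether. It takes $U=p_2^{-1}([0,1))$ and $V=p_2^{-1}((0,1])$, which are homotopy equivalent to the bottom and top slices and are \emph{not} contractible; what is used instead is that the inclusions $U\hookrightarrow AC(X)$ and $V\hookrightarrow AC(X)$ are null-homotopic (every copy surviving in one slice is coned off at the opposite level), so the Mayer--Vietoris sequence splits into short exact sequences and $H_{n+1}(AC(X))$ is the kernel of $(i_{U*},i_{V*}):H_n(U\cap V)\to H_n(U)\oplus H_n(V)$. Since $U\cap V\simeq p_2^{-1}(\{1/2\})$ is the one-point union of the two slices and $i_U,i_V$ correspond to the two wedge retractions, Lemma~\ref{lmm:cone} identifies this kernel with $H_n$ of the wedge of the two cones: the cones enter purely algebraically, never as pieces of $AC(X)$. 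If you insist on a geometric route you would at minimum have to enlarge $U$ and $V$ by the arc squares and the limit segment on the far side of level $1/2$, so that every level set of the added part is the whole chain rather than a disjoint union of copies, and then reprove all the homotopy identifications; as written, the proposal is not correct.
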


\begin{thm}\label{thm:4}
Let $X$ be a path connected space. Then 
$\pi _1(X)$ is trivial if and only if $\pi_2(AC(X))$ is trivial. 
\end{thm}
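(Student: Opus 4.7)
The plan is to reduce Theorem~\ref{thm:4} to a first-homology computation. Since $AC(X)$ is simply connected by Theorem~\ref{thm:1}, the Hurewicz theorem yields $\pi_2(AC(X))\cong H_2(AC(X))$; applying Theorem~\ref{thm:3} with $n=1$ then identifies this with $H_1(W)$, where $W$ denotes the space appearing on the right-hand side of the isomorphism in Theorem~\ref{thm:3}. Hence, it suffices to prove that $H_1(W)=0$ if and only if $\pi_1(X)=1$.

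For the forward implication $\pi_1(X)=1\Rightarrow H_1(W)=0$: observe that $X^*$ deformation retracts onto $X$, so $\pi_1(X^*)$ is trivial. Lemma~\ref{lmm:first} (or, in the absence of local simple connectivity, a direct argument about loops using the neighborhood basis of the shrinking wedge) then gives that $\widetilde{\bigvee}_{i}(X_{2i-1},x_{2i-1})$ and $\widetilde{\bigvee}_{i}(X_{2i},x_{2i})$ have trivial fundamental groups, whence trivial first homology via~(\ref{equation:H_1}). A Mayer-Vietoris computation on $W$, using the contractibility of each cone together with the structure of its attaching locus, then yields $H_1(W)=0$, and therefore $\pi_2(AC(X))=0$.

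For the converse $\pi_1(X)\neq 1\Rightarrow H_1(W)\neq 0$: choose a non-nullhomotopic loop $\gamma:S^1\to X$ and construct a 1-cycle $\ell$ in $W$ by threading $\gamma$ through the alternating copies $X_i$ in a commutator-like pattern, exploiting the shrinking-wedge topology so that $\ell$ is a genuine cycle rather than an infinite formal sum. To show $\ell$ is not a boundary, note that any bounding singular 2-chain has compact image and therefore factors through only finitely many of the $X_i$'s; composing with a retraction of $W$ onto a single $X_n$ for sufficiently large $n$ and invoking the commutator-length inequality~(\ref{equation:cl(phi)}) produces a bound on $cl(\gamma)$ in a quotient, contradicting the non-triviality of $\gamma$.

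The principal obstacle is this converse direction, where one must carefully set up $\ell$ and the commutator-length argument so as to translate the non-triviality of $\gamma\in\pi_1(X)$ into the non-vanishing of a specific homology class in $H_1(W)$. This parallels the analogous argument for $SC(X)$ from~\cite{EKR TA}, but the alternating structure of $AC(X)$ is expected to make the bookkeeping more symmetric and correspondingly simpler, consistent with the paper's stated motivation.
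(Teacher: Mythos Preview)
Your reduction via Hurewicz and the Mayer--Vietoris decomposition is exactly what the paper does, and your forward direction is essentially correct. However, the detour through Theorem~\ref{thm:3} and the space $W=C(\widetilde{\bigvee}X_{2i-1})\vee C(\widetilde{\bigvee}X_{2i})$ is an unnecessary extra layer: the paper stays with the exact sequence
\[
0\longrightarrow H_2(AC(X))\longrightarrow H_1(U\cap V)\xrightarrow{(i_{U*},i_{V*})} H_1(U)\oplus H_1(V)\longrightarrow 0
\]
and works directly in $U\cap V\simeq\widetilde{\bigvee}_{n}(X_n,x_n)$, which is much more tractable than $W$.

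Your converse direction has a genuine gap. First, there is no retraction of $W$ onto a single $X_n$: the cone containing $X_n$ is contractible, so it cannot retract onto $X_n$ when $\pi_1(X)\neq 1$. Even if there were such a map, projecting the commutator product $\prod[a_{2i-1},a_{2i}]$ to a single factor kills every commutator, so no information about $\gamma$ survives. Second, the compactness argument does not give ``factors through finitely many $X_i$'' inside $W$, since every neighborhood of the wedge point $x^*$ (and of the cone points) meets all but finitely many $X_i$'s. Third, and most importantly, the inequality~(\ref{equation:cl(phi)}) alone only gives \emph{upper} bounds on commutator length; you never produce the required \emph{lower} bound.

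What the paper actually does is construct $a^*\in\pi_1(U\cap V)$ whose image under the retraction of the shrinking wedge onto the finite subwedge $X_1\vee\cdots\vee X_{2n}$ is $[a_1,a_2]\cdots[a_{2n-1},a_{2n}]$. Griffiths' theorem \cite{Griffiths:commutator} on commutators in free products then gives that this element has commutator length exactly $n$ in $\pi_1(X_1)*\cdots*\pi_1(X_{2n})$; combined with~(\ref{equation:cl(phi)}) this forces $cl(a^*)\ge n$ for every $n$, so $a^*\notin\pi_1(U\cap V)'$ and $[a^*]\neq 0$ in $H_1(U\cap V)$. One then checks that $i_{U*}[a^*]=i_{V*}[a^*]=0$ because in $U$ (resp.\ $V$) all even (resp.\ odd) $a_i$ become trivial, annihilating every commutator. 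The missing ingredient in your sketch is precisely this appeal to Griffiths, together with the use of retractions onto \emph{finite wedges} in $U\cap V$ rather than onto a single factor in $W$.
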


It is easy to see that if $\{a, b\}$ is a two-point space with the
discrete topology then
the space $AC(\{a, b\})$ is path connected,
whereas
the space $SC(\{a, b\})$
is not.  The following theorem shows their difference from another
perspective: 

\begin{thm}\label{thm:5}
There exists a Peano continuum $X$ such that $AC(X)$
is not homotopy equivalent to $SC(X)$.
\end{thm}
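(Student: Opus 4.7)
The plan is to exhibit a specific Peano continuum $X$ for which a computable homotopy invariant distinguishes $AC(X)$ from $SC(X)$. A natural candidate is $X = S^1$: both $AC(S^1)$ and $SC(S^1)$ are simply connected $2$-dimensional cell-like Peano continua (by Theorems~\ref{thm:1} and \ref{thm:4} and their $SC$-analogues in \cite{EKR FM, EKR TA}), and both have nontrivial $\pi_2$. Since both spaces are simply connected, the Hurewicz theorem gives $H_2 \cong \pi_2$ in each case, and both are nonzero.

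The next step is to compute the two $H_2$-groups explicitly and to identify an algebraic discrepancy between them. For $AC(S^1)$, I would apply Theorem~\ref{thm:3} to express $H_2(AC(S^1))$ as $H_1$ of a wedge of two cones over shrinking wedges of copies of $(S^1)^*$, where the attaching scheme is dictated by the alternation between odd and even indices. For $SC(S^1)$, the analogous computation from \cite{EKR TA, EKR Glasnik} expresses $H_2(SC(S^1))$ as $H_1$ of a space reflecting the single collapsed limit segment $L \times S^1 \to L$. These two attaching patterns are fundamentally different, an alternating double-apex structure versus a single-line collapse, and I expect the resulting abelian groups to be non-isomorphic, detectable through rank, torsion, or a natural commutator-length filtration (cf.~\eqref{equation:cl(phi)}). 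An alternative but essentially equivalent route is to compare the local homotopy/Čech structure at the wild points: $SC(X)$ has wild points along the entire limit segment $L$, whereas $AC(X)$ has wild points only at the two ``end points'' $(0,\infty)$ and $(1,\infty)$, where odd- and even-indexed cones cluster respectively. Any invariant sensitive to the number or shape of such loci will give a clean separation.

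The principal obstacle will be making the algebraic distinction precise, since both $H_2$-groups are uncountable and arise as inverse limits of intricate free products. If the comparison for $X = S^1$ is too symmetric to yield a clean separation, I would escalate to a more rigid Peano continuum, for example an infinite wedge of circles, the Hawaiian earring, or a continuum with torsion in $\pi_1$, so that the parity-based structure of $AC(X)$ and the linear ``snake'' structure of $SC(X)$ produce more manifestly different algebraic output. In the worst case, the cleanest path may be to bypass $H_2$ entirely and use the local-structural distinction at wild points described above, which directly generalises the motivating remark that $AC(\{a,b\})$ is connected while $SC(\{a,b\})$ is not.
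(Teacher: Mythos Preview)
What you have written is a strategy outline, not a proof: the decisive step---showing that $H_2(AC(S^1))\not\cong H_2(SC(S^1))$ as abstract groups---is never carried out, and you yourself flag it as ``the principal obstacle.'' There is no guarantee this step succeeds; both groups are uncountable quotients of free $\sigma$-products, and an abstract isomorphism between them is not ruled out by anything you have said. Your fallback, comparing the ``wild loci,'' rests on a misreading of the construction: in $AC(X)$ the limit point of $Y$ crossed with $\mathbb{I}$ still produces an entire limit segment $[A,B]$, exactly as in $SC(X)$, so the crude count of wild points does not separate the two spaces.

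The paper's proof is completely different and avoids any homology computation. It takes $X=\mathbb{H}$, the Hawaiian earring, and uses as invariant the set $[\mathbb{H},-]$ of homotopy classes of maps from $\mathbb{H}$. The obvious embedding $\varphi:\mathbb{H}\hookrightarrow SC(\mathbb{H})$ placing $\mathbb{H}$ over $A_1$ is shown to be essential: any null-homotopy would have to push the wild point $x^*$ across to the limit segment $[A,B]$, and a connectedness argument forces the track of $x^*$ to pass through a contractible disk in $\mathbb{I}^2\setminus\mathcal{T}$, from which one extracts a contraction of a neighbourhood of $x^*$ inside $\mathbb{H}$---impossible. On the other hand, \emph{every} map $\psi:\mathbb{H}\to AC(\mathbb{H})$ is inessential: the image of $x^*$ lies in $U$ or $V$, so all but finitely many circles of $\mathbb{H}$ map into a set that deformation-retracts to a contractible cone, and the remaining finitely many circles are null-homotopic because $AC(\mathbb{H})$ is simply connected. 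A homotopy equivalence $f:SC(\mathbb{H})\to AC(\mathbb{H})$, $g:AC(\mathbb{H})\to SC(\mathbb{H})$ would give $\varphi\simeq g f\varphi$ with $f\varphi$ inessential, a contradiction. This argument is short, elementary, and bypasses the algebraic difficulties your plan runs into.
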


\section{Proofs of Theorems~\ref{thm:1} and \ref{thm:2}}

In this section we shall prove Theorems~\ref{thm:1} and \ref{thm:2}. 
The argument is similar to the one which was used for proving analogous 
statements concerning 
$SC(X)$.  
\medskip

\noindent
{\it Proof}~ of Theorem~\ref{thm:1}. 
Since $X$ is a compactum, the space $AC(X)$ is the inverse limit of an
inverse sequence of contractible
compact metrizable  spaces.
Therefore $AC(X)$ is a compact metrizable space of trivial shape, i.e.  cell-like.

If ${\rm dim}\ X = n$ and $X$ is compact and 
metrizable then the
dimension of its
one-point compactification $Y$ is equal to $n$  and thus
${\rm dim}\ (Y\times \mathbb{I}) = n+1.$ Therefore 
${\rm dim}\ AC(X) = n+1.$
If $X$ is a
Peano continuum then  $AC(X)$ is compact, metrizable and locally path connected, hence also a
Peano continuum.

Let us prove that $AC(X)$ is a simply connected space. Let $U =
p_2^{-1}([0, 1))$ and $V = p_2^{-1}((0, 1])$ (cf. Figure~\ref{fig:fig-1}). 
The sets
$U$ and $V$ are homotopy equivalent
to $p_2^{-1}(\{ 0\})$ and $p_2^{-1}(\{ 1\})$, respectively. Their
intersection
$U\cap
V$ is homotopy equivalent to  $p_2^{-1}(\{ 1/2\})$. Obviously,
the mapping
$$\pi_1(p_2^{-1}(\{ 1/2\})) \rightarrow \pi_1(p_2^{-1}(\{ 0\}))\ast \pi_1(p_2^{-1}(\{ 1\}))$$
is surjective and the
intersection $U\cap V$ is path connected.
Therefore by the van Kampen theorem, the union $U\cup V =
AC(X)$ is  simply connected. \qed

For a
noncontractible space $X$, the noncontractibility of $AC(X)$ can be 
proved similarly as for $SC(X)$ (cf. \cite[Theorem
1.2]{EKR FM}). Hence we only indicate the difference and we state the necessary lemmas. 

Let $M_y = p_2^{-1}(\{ y\})$ and $AC_n(X) = p^{-1}([A_{2n}, B_{2n}]\cup
[A_{2n+1},B_{2n+1}])$ be the subspaces of $AC(X)$. 
The following lemma can be proved  analogously as \cite[Lemma
4.2]{EKR FM}. 
\begin{lmm}\label{lmm:flat}
Let $n\in \mathbb{N}$ and let
$H:AC_n(X)\times\mathbb{I}\to AC(X)$ be a
 mapping such that for every $y\in \mathbb{I}$ and $t\in \mathbb{I}$,
 the closure of the set
$p_2(H(M_y\cap AC_n(X), t))$ does not contain both points
$0$ and $1$, and such that both mappings $H(-,0)$ and $H(-,1)$ are flat. 
Then there exists a flat homotopy from $H(-,0)$ to $H(-,1)$.
\end{lmm}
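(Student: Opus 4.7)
My plan is to adapt the argument of \cite[Lemma~4.2]{EKR FM} by exploiting the cone structure of the strips of $AC(X)$. The essential geometric input is the following: every slab $p_2^{-1}(J) \subseteq AC(X)$, with $J = [a, b] \subseteq \mathbb{I}$ missing at least one of the endpoints $\{0, 1\}$, admits a canonical strong deformation retraction $\rho_J: p_2^{-1}(J) \times \mathbb{I} \to p_2^{-1}(J)$ onto a single horizontal slice, obtained by sliding along the cone parameter of each strip $p^{-1}([A_m, B_m])$. Crucially, this deformation preserves flatness: if $g: AC_n(X) \to p_2^{-1}(J)$ is flat, then $\rho_J(g(-), s)$ is flat for every $s \in \mathbb{I}$.

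Using the hypothesis together with compactness of each slice $M_y \cap AC_n(X)$ and uniform continuity of $H$, I would cover the parameter square $\mathbb{I} \times \mathbb{I}$ by finitely many open rectangles $V_1, \ldots, V_N$, each carrying a slab $J_k \subsetneq \mathbb{I}$ missing $0$ or $1$ such that $p_2(H(M_y \cap AC_n(X), t)) \subseteq J_k$ for every $(y, t) \in V_k$. Choose a subordinate partition of unity $\{\lambda_k\}$ that vanishes near $t = 0$ and $t = 1$ (achievable because $H(-, 0)$ and $H(-, 1)$ are already flat, so no flattening is required there). The candidate flat homotopy $\widetilde H$ is then defined by applying the slab retractions $\rho_{J_k}$ to $H$ with mixing weights $\lambda_k(y, t)$, so that for each fixed $t$ the result $\widetilde H(-, t)$ slides every slice $M_y \cap AC_n(X)$ onto a single horizontal level of $AC(X)$. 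Flatness of $\widetilde H(-, t)$ follows from flat-preservation of each $\rho_{J_k}$, while the boundary conditions $\widetilde H(-, 0) = H(-, 0)$ and $\widetilde H(-, 1) = H(-, 1)$ follow from the vanishing of $\lambda_k$ near $t \in \{0, 1\}$.

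The main obstacle is verifying that the weighted combination of slab retractions — each $\rho_{J_k}$ possibly targeting a different horizontal slice and defined on a different slab — is well-defined, jointly continuous, and yields a flat map at every time $t$. The resolution is that within each strip $p^{-1}([A_m, B_m])$ all the slab retractions act along the same one-dimensional cone coordinate, so their weighted combination is again a one-parameter sliding along the cone, automatically sending each slice to a single horizontal level. Continuity at the degenerate points — the limit segment $L$ of $AC(X)$ and the cone apices where the sliding direction collapses — requires the same slicewise care as in the proof of \cite[Lemma~4.2]{EKR FM}, namely checking that the partition of unity can be arranged to be constant along the relevant level sets of $p_2$ near the degeneracies.
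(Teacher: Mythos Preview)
Your proposal is correct and follows essentially the same route as the paper: the paper's entire proof of this lemma is the single sentence that it ``can be proved analogously as \cite[Lemma~4.2]{EKR FM},'' and your sketch is precisely an adaptation of that argument to the alternating-cone setting. In fact you supply considerably more detail than the paper does.
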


For $s\in (0,1)$ and $t\in \mathbb{I}$, we define a property
$P(s,t)$ of the
flat homotopy $H$ as follows: 
\begin{quote}
$H(M_s\cap AC_n(X), t)\subseteq p_2^{-1}((0,1))$ 
and the restriction of $H(-,t)$ to $M_s\cap AC_n(X)$
is homotopic to the identity mapping on $M_s\cap AC_n(X)$ in 
$p_2^{-1}((0,1))$.
\end{quote}
We remark that by the flatness of $H$, if 
$H(M_s\cap AC_n(X), t)\subseteq p_2^{-1}((0,1))$,  
then there is a neighborhood $U$ of $(s;t)$ such that 
$H(M_{s'}\cap AC_n(X), t')\subseteq p_2^{-1}((0,1))$ for
any $(s';t')\in U$. 
The following lemma can be proved similarly as \cite[Lemma 4.3]{EKR FM}. 
\begin{lmm}\label{lmm:crucial}
Let $X$ be a noncontractible space and $H:AC_n(X)\times\mathbb{I}
\to AC(X)$ a flat homotopy. If $ H(M_0\cap AC_n(X),t_0)
\subseteq p_2^{-1}((0,1))$, then
 there exists a neighborhood $U$ of $(0;t_0)$ such that $H$ does not
 satisfy $P(s,t)$ for
 any $(s;t)\in U$ with $s>0$. An analogous statement holds for 
$H(M_1\cap AC_n(X),t_0) \subseteq p_2^{-1}((0,1))$. 
\end{lmm}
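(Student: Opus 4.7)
The plan is to argue by contradiction, exploiting the fact that one of the two vertical cones constituting $AC_n(X)$ collapses to a single point as the height $s$ decreases to $0$. I would suppose there is a sequence $(s_k,t_k)\to (0,t_0)$ with $s_k>0$ for which $P(s_k,t_k)$ holds, and focus on the cone $p^{-1}([A_{2n},B_{2n}])$, whose apex sits at the bottom point $A_{2n}$. For $s\in[0,1]$, let $j_s\colon X\to AC_n(X)$ denote the inclusion of the copy of $X$ at height $s$ inside this cone, so that $j_0$ degenerates to the constant map with value $A_{2n}$.

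First, since $H(A_{2n},t_0)\in p_2^{-1}((0,1))$ and the latter is open, continuity of $H$ together with compactness of $X$ and the cone neighborhood base at the apex $A_{2n}$ yield $\varepsilon,\delta>0$ with $H(j_s(X),t)\subseteq p_2^{-1}((0,1))$ whenever $s<\varepsilon$ and $|t-t_0|<\delta$. For each large $k$ the formula
\[
G(x,u)=H\bigl(j_{u s_k}(x),\,(1-u)t_0+u t_k\bigr)
\]
then defines a homotopy inside $p_2^{-1}((0,1))$ from the constant map with value $H(A_{2n},t_0)$ (at $u=0$) to $H(-,t_k)\circ j_{s_k}$ (at $u=1$), showing that $H(-,t_k)\circ j_{s_k}$ is null-homotopic in $p_2^{-1}((0,1))$. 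On the other hand, restricting the homotopy provided by $P(s_k,t_k)$ to the $X_{2n}$-component of $M_{s_k}\cap AC_n(X)$ would give $H(-,t_k)\circ j_{s_k}\simeq j_{s_k}$ in $p_2^{-1}((0,1))$, and combining the two conclusions would force $j_{s_k}$ itself to be null-homotopic there.

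Finally, I would convert this into a contradiction with the noncontractibility of $X$. Because no identifications occur at interior heights, $p_2^{-1}((0,1))$ is canonically $Y\times(0,1)\simeq Y$, and sliding in the second coordinate shows that $j_{s_k}$ represents the canonical inclusion $X=X_{2n}\hookrightarrow Y$. The map $r\colon Y\to X_{2n}$ that sends every other copy $X_m$ ($m\ne 2n$) and every connecting arc to the basepoint $x_{2n}$ is a continuous retraction (continuity at the accumulation point of $Y$ is automatic, since every basic neighborhood there is mapped to the single point $\{x_{2n}\}$), so composing with $r$ would exhibit $\mathrm{id}_X$ as null-homotopic, contradicting the hypothesis. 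The analogous statement for $M_1$ follows symmetrically, using the cone $p^{-1}([A_{2n+1},B_{2n+1}])$ with apex $B_{2n+1}$ at the top. I expect the main obstacle to be the very first step --- producing the uniform containment $H(j_s(X),t)\subseteq p_2^{-1}((0,1))$ on a neighborhood of $(0,t_0)$ --- since this requires $j_s(X)$ to shrink into arbitrarily small cap-neighborhoods of $A_{2n}$ as $s\to 0$, which uses both compactness of $X$ and the specific local cone topology of $AC(X)$ at the apex.
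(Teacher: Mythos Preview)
Your argument is correct and follows the same line as the paper's intended proof (the paper itself gives no details, simply referring to the analogous \cite[Lemma~4.3]{EKR FM}). Your worry about compactness in the first step is unnecessary: since $H$ is a \emph{flat} homotopy, the value $p_2(H(j_s(x),t))$ depends only on $s$ and $t$, not on $x\in X$; hence the continuous function $(s,t)\mapsto p_2(H(j_s(x_*),t))$ (for any fixed $x_*$) takes the value $p_2(H(A_{2n},t_0))\in(0,1)$ at $(0,t_0)$, and this alone yields the required $\varepsilon,\delta$ with $H(j_s(X),t)\subseteq p_2^{-1}((0,1))$, without invoking compactness of $X$ or any special feature of the cone neighborhoods.
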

By Lemmas~\ref{lmm:flat} and \ref{lmm:crucial}, we can use
the proof of \cite[Lemma 4.4]{EKR FM} to verify also the next lemma. 

\begin{lmm}\label{lmm:noncontr}
Let $X$ be a noncontractible space.
If $H:AC_n(X)\times \mathbb{I}\to AC(X)$ is a flat homotopy such
that $H(u, 0) = u$ for every $u\in AC_n(X)$, then $H(-,1)$ is not a
 constant mapping.
\end{lmm}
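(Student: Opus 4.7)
The plan is to argue by contradiction, transcribing the proof of \cite[Lemma 4.4]{EKR FM} to the present setting. Suppose that $H(-,1)$ is constant at some point $c \in AC(X)$. Because $H$ is flat, the assignment $(y,\tau)\mapsto p_2(H(u,\tau))$ (for any $u$ with $p_2(u)=y$) is well-defined and gives a continuous map $\overline h:\mathbb{I}\times\mathbb{I}\to\mathbb{I}$ with $\overline h(y,0)=y$ and $\overline h(y,1)=p_2(c)$. The curves $\tau\mapsto\overline h(0,\tau)$ and $\tau\mapsto\overline h(1,\tau)$ start at $0$ and $1$ respectively and meet at the common terminal value $p_2(c)$, so by the Intermediate Value Theorem there is a smallest $t_0\in(0,1]$ at which either $\overline h(0,t_0)\in(0,1)$ or $\overline h(1,t_0)\in(0,1)$. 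Without loss of generality assume the former, so that $H(M_0\cap AC_n(X),t_0)\subseteq p_2^{-1}((0,1))$.

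Next, I would apply Lemma~\ref{lmm:crucial} at $t_0$ to obtain a neighborhood $U$ of $(0;t_0)$ where $P(s,\tau)$ fails for every $(s;\tau)\in U$ with $s>0$. On the other hand, the initial condition $P(s,0)$ holds for every $s\in(0,1)$, since $H(-,0)=\op{id}$ and $M_s\cap AC_n(X)\subseteq p_2^{-1}(\{s\})\subseteq p_2^{-1}((0,1))$. To contradict Lemma~\ref{lmm:crucial}, I would show that $P(s,\tau)$ in fact holds for some $(s;\tau)\in U$ with $s>0$. By minimality of $t_0$, the continuous function $\overline h(0,\cdot)$ stays in the discrete set $\{0,1\}$ on $[0,t_0)$ and is therefore identically $0$ there. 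For small $s>0$ and $\tau<t_0$ close to $t_0$, an application of Lemma~\ref{lmm:flat} to an appropriate auxiliary map on the slab $AC_n(X)\times[0,\tau]$ then yields a flat homotopy, contained in $p_2^{-1}((0,1))$, from the identity on $M_s\cap AC_n(X)$ to $H(-,\tau)|_{M_s\cap AC_n(X)}$. This realizes $P(s,\tau)$ for $(s;\tau)\in U$ with $s>0$ and gives the required contradiction.

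The main obstacle is the construction of the auxiliary homotopy used above and the verification of the closure hypothesis of Lemma~\ref{lmm:flat} on the slab: one must ensure that over the slab the $p_2$-image of the auxiliary $H(M_y\cap AC_n(X),\tau')$ never accumulates to both $0$ and $1$, and that $\overline h(s,\cdot)$ remains \emph{strictly} in $(0,1)$ for the chosen $s>0$. Both facts ultimately stem from the minimality of $t_0$ together with the alternating identification pattern of $AC(X)$, which keeps one of the two cones in $AC_n(X)$ cleanly away from the relevant boundary level; the bookkeeping is identical to the one carried out for $SC(X)$ in \cite[Lemma 4.4]{EKR FM}, which is why the authors can simply reuse that proof scheme.
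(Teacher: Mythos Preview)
Your proposal is correct and follows exactly the paper's approach: the paper does not give an independent argument for this lemma but simply notes that, with Lemmas~\ref{lmm:flat} and \ref{lmm:crucial} in hand, the proof of \cite[Lemma 4.4]{EKR FM} carries over verbatim, which is precisely the scheme you outline. One small wording issue: the set of times where $\overline h(0,\cdot)\in(0,1)$ is open, so there is no literal ``smallest $t_0$'' as you phrase it; the standard formulation (as in \cite{EKR FM}) takes $t_0$ to be the supremum of the initial interval on which both $\overline h(0,\cdot)$ and $\overline h(1,\cdot)$ remain in $\{0,1\}$, after which your argument proceeds unchanged.
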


We can now
regard the proof of \cite[Theorem 1.2]{EKR FM} also as a proof of
Theorem~\ref{thm:2}. We shall only retrace its line of argument. 
\medskip

\noindent
{\it Proof}~ of Theorem~\ref{thm:2}. Suppose that the space $AC(X)$ is contractible. Then there exists
a homotopy $H:AC(X)\times T\to AC(X)$ connecting the identity
mapping with the constant one. By 
compactness of the time interval $\mathbb{I}$, for every $a = (0;y) \in \{ 0\}
\times \mathbb{I}$, there exists $\vep _0>0$ such that the diameters of
$p_2\circ H(O_{\vep _0}(a),t)$ are less than $1$ for all $t\in
\mathbb{I}$. Hence, by compactness of $[A,B]$, 
there exists $\vep _1> 0$ such that the diameters 
$p_2\circ H(O_{\vep _1}(a),t)$ are less than $1$ for all $a = (0;y)\in \{ 0\}
\times \mathbb{I}$ and all $t\in \mathbb{I}$. 
Let $n$ be a number such that $1/n < \vep _1$. By Lemma \ref{lmm:flat}
we may then assume that $H|_{AC_n(X)\times \mathbb{I}}$ is a flat
contraction. However, this contradicts Lemma \ref{lmm:noncontr}.
\qed

\section{Proofs of Theorems~\ref{thm:3} and \ref{thm:4}}
In this section we prove Theorems~\ref{thm:3} and \ref{thm:4}. 
Here we shall
see a very different feature of $AC(X)$ in comparison with
$SC(X)$.  

Let $U = p_2^{-1}([0,1))$ and $V = p_2^{-1}((0, 1])$.  
The spaces $U, V$ and $U\cap V$ are homotopy equivalent to
$p_2^{-1}(\{ 0\}), p_2^{-1}(\{ 1 \})$ and $p_2^{-1}(\{ 1/2\} )$, respectively. 
Let $i_U$ and $i_V$ be the inclusion maps from $U\cap V$ to $U$ and $V$,
respectively. 
Consider
the following part of the Mayer-Vietoris exact sequence for the
singular homology:
\begin{eqnarray*}
&&H_{n+1}(U)\oplus H_{n+1}(U)\longrightarrow H_{n+1}(AC(X))\longrightarrow H_n(U\cap V)\stackrel{(i_{U*}, i_{V*})}
\longrightarrow \\
&&\qquad \qquad H_n(U)\oplus H_n(V)\longrightarrow 
H_n(AC(X)).
\end{eqnarray*}
Since the inclusion maps from $U$ to $AC(X)$ and from $V$ to $AC(X)$ are inessential, we have the following exact sequence: 
\[
0 \longrightarrow H_{n+1}(AC(X))\longrightarrow H_n(U\cap V)
\stackrel{(i_{U*}, i_{V*})}\longrightarrow 
 H_n(U)\oplus H_n(V)\longrightarrow 0.
\]

We recall the following fact:
\begin{lmm}\label{lmm:cone}\cite[Theorem 1.3]{EK:aloha}
Let $CX$ be the cone over a space $X$ and $CX\vee CY$ 
the one-point union with two points of the base spaces 
$X$ and $Y$ being identified to a point. 
Then, the following holds for $n\ge 1:$  
\[
\op{H}_{n}(X\vee Y)
\cong  
\op{H}_{n}(X)\oplus\op{H}_{n}(Y)\oplus \op{H}_{n}(CX\vee CY).
\]
\end{lmm}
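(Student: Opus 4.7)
The plan is to analyze the long exact sequence of the pair $(CX\vee CY,\ X\vee Y)$ and to split it using the natural retractions of the wedge.

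First I would check that $X\vee Y\hookrightarrow CX\vee CY$ is a cofibration. The pair $(CX,X)$ is NDR: the neighborhood $X\times[0,\tfrac12]$ deformation retracts onto $X\times\{0\}=X$ along the cone lines, and the Urysohn function $(x,s)\mapsto\min(2s,1)$ descends to $CX$ because the apex is sent to $1$. Wedging two such NDR pairs at matched basepoints preserves this property, so
\[
H_n(CX\vee CY,\ X\vee Y)\ \cong\ \widetilde{H}_n\bigl((CX\vee CY)/(X\vee Y)\bigr).
\]
Collapsing $X\subset CX$ (whose apex is already a single point) produces the unreduced suspension $SX$, and the wedge point of $X\vee Y$ is sent to the south pole of each suspension, so the quotient is $SX\vee SY$ wedged at those poles. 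Each pole carries a contractible cone neighborhood, hence is a non-degenerate basepoint, so the standard wedge formula gives $\widetilde{H}_n(SX\vee SY)\cong\widetilde{H}_{n-1}(X)\oplus\widetilde{H}_{n-1}(Y)$; after re-indexing,
\[
H_{n+1}(CX\vee CY,\ X\vee Y)\ \cong\ \widetilde{H}_n(X)\oplus\widetilde{H}_n(Y).
\]

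Next I would identify the connecting map $\partial_{n+1}$ of the pair LES. By naturality along the pair inclusion $(CX,X)\hookrightarrow(CX\vee CY,\ X\vee Y)$, together with the fact that $\partial_{CX}\colon H_{n+1}(CX,X)\to\widetilde{H}_n(X)$ is an isomorphism (since $CX$ is contractible), the restriction of $\partial_{n+1}$ to the $\widetilde{H}_n(X)$ summand becomes the inclusion-induced map $(i_X)_*\colon\widetilde{H}_n(X)\to H_n(X\vee Y)$, and symmetrically on the $Y$ summand. The collapsing retractions $r_X\colon X\vee Y\to X$ and $r_Y\colon X\vee Y\to Y$ satisfy $r_X\circ i_X=\mathrm{id}_X$ while $r_X\circ i_Y$ is constant (and likewise with the roles swapped), so on reduced homology the pair $((r_X)_*,(r_Y)_*)$ is a common left inverse to $\partial_{n+1}=(i_X)_*\oplus(i_Y)_*$, proving that $\partial_{n+1}$ is split injective for every $n\geq 1$.

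Split injectivity of $\partial$ at each degree forces the map $H_n(CX\vee CY)\to H_n(CX\vee CY,X\vee Y)$ to vanish for all $n\geq 1$, so the pair LES collapses to a short exact sequence
\[
0\longrightarrow H_n(X)\oplus H_n(Y)\longrightarrow H_n(X\vee Y)\longrightarrow H_n(CX\vee CY)\longrightarrow 0,
\]
which is split by $((r_X)_*,(r_Y)_*)$, delivering the desired direct-sum decomposition. The main obstacle will be the quotient-identification step in the first paragraph: verifying the wedge formula for $SX\vee SY$ when one or both of $X,Y$ may be "wild," since in the motivating applications the basepoints of the spaces involved are themselves badly behaved. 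Fortunately the cone structure at each suspension pole always furnishes a contractible NDR neighborhood, so the wedge formula applies without any additional hypothesis on $X$ or $Y$.
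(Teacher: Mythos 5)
The paper does not actually prove this lemma --- it is quoted verbatim from Eda--Kawamura \cite[Theorem 1.3]{EK:aloha} and used as a black box --- so there is no internal proof to compare against; what can be said is that your argument is a correct, self-contained derivation, and that it is consistent with how the lemma is applied immediately afterwards (the paper's proof of Theorem~\ref{thm:3} uses precisely your splitting, identifying $H_n(CX\vee CY)$ with the kernel of $((r_X)_*,(r_Y)_*)$). The logical skeleton is sound: the NDR structure on $(CX,X)$ descends to the wedge since the identified basepoints lie in the subspace and are fixed by the deformation; the quotient $(CX\vee CY)/(X\vee Y)$ is indeed $SX\vee SY$ joined at the south poles; and the naturality argument identifying $\partial_{n+1}$ with $(i_X)_*\oplus(i_Y)_*$ is standard. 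You correctly isolate the one point where wildness could bite: the wedge formula is exactly what \emph{fails} for $X\vee Y$ itself (that failure is the content of the lemma, with $H_n(CX\vee CY)$ as the error term), yet it \emph{holds} for $SX\vee SY$ because the suspension poles always carry contractible cone neighborhoods, i.e., are nondegenerate basepoints regardless of how bad $X$ and $Y$ are. Two small points worth making explicit if this were written up: the unreduced suspension isomorphism $\widetilde{H}_{n}(SX)\cong\widetilde{H}_{n-1}(X)$ holds for arbitrary $X$ by Mayer--Vietoris on the two open cones, and the injectivity of $\partial_1$ into unreduced $H_0(X\vee Y)$ (needed to get surjectivity of $H_1(X\vee Y)\to H_1(CX\vee CY)$) still follows from your splitting because $r_X\circ i_Y$ is constant and hence kills reduced $H_0$.
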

\medskip

\noindent
{\it Proof}~ of Theorem~\ref{thm:3}. 
Let $r_X: X\vee Y \to X$ and $r_Y: X\vee Y \to Y$ be the retractions 
from
Lemma~\ref{lmm:cone}. 
Then $\op{H}_{n}(CX\vee CY)$ is the kernel of the homomorphism 
$((r_X)_* ,(r_Y)_*) : \op{H}_{n}(X\vee Y)\to
\op{H}_{n}(X)\oplus\op{H}_{n}(Y)$. 

\begin{equation*}
\begin{tabular}{crrrc}
$V$ & $\cong$ & $p_2^{-1}(\{ 1\})$ & $=$ & $p_2^{-1}(\{ 1\})$ \\ 
\multicolumn{1}{r}{$\uparrow i_V$} & {}  &{} &  {} & \multicolumn{1}{c}{$\uparrow r_B$}\\ 
$U\cap V$ & $\cong$ & $p_2^{-1}(\{ 1/2\})$ & $\cong$ &
 $(p_2^{-1}(\{ 0\}), A) \vee (p_2^{-1}(\{ 1\}),B)$ \\
\multicolumn{1}{r}{$\downarrow i_U$} & {} &{} & {} &
 \multicolumn{1}{c}{$\downarrow r_A$} \\ 
$U$ & $\cong$ & $p_2^{-1}(\{ 0\})$ & =  & $p_2^{-1}(\{ 0\})$ 
\end{tabular}
\end{equation*}
Since the diagram above
is homotopy commutative, 
$H_{n+1}(AC(X))$ is isomorphic to $H_n(C(p_2^{-1}(\{ 0\})) \vee
C(p_2^{-1}(\{ 1\}))$.

For each $n$, let $X_n$ be a copy of the one-point union of $X$ and 
the unit interval 
$\mathbb{I}$, where the attaching point is $0\in\mathbb{I}$.
Let $x_n$ be a copy of $1\in \mathbb{I}$. 
We regard $X_n$ as
$[C,C_n]\cup p^{-1}(\{ C_n\})$. 
Then $p_2^{-1}(\{ 0\})$ is homotopy equivalent to 
$\widetilde{\bigvee}_{n\in \mathbb{N}}(X_{2n-1},x_{2n-1})$, 
 $p_2^{-1}(\{ 1\})$ is homotopy equivalent to 
$\widetilde{\bigvee}_{n\in \mathbb{N}}(X_{2n},x_{2n})$ and  
$p_2^{-1}(\{ 1/2\})$ is homotopy equivalent to 
$\widetilde{\bigvee}_{n\in \mathbb{N}}(X_n,x_n)$. 
This verifies the assertion. 
\qed
\medskip

\noindent
{\it Proof}~ of Theorem~\ref{thm:4}. 
By the result preceding  the proof of Theorem~\ref{thm:3}, we have the exact sequence: 
\[
0 \longrightarrow H_2(AC(X))\longrightarrow H_1(U\cap V)
\stackrel{(i_{U*}, i_{V*})}\longrightarrow 
 H_1(U)\oplus H_1(V)\longrightarrow 0.
\]
Suppose that $\pi _1(X)$ is trivial. Since $\pi _1(U\cap V)$ is isomorphic 
to 
$\pmc{$\times$}\, \, \; _{n\in\mathbb{N}}\pi _1(X_n,x_n)$, $\pi _1(U\cap V)$ is also trivial. Hence $H_1(U\cap V)$ is trivial, which implies that 
$H_2(AC(X))$ is trivial,
by the above exact sequence. 

On the contrary, suppose that $\pi _1(X)$ is nontrivial. Choose a
non-trivial element $a$ of $\pi _1(X)$ and let $a_n\in \pi _1(X_n)$
be copies of $a$.   
Then there exists an element $a^*$ of the group $\pi_1(U\cap
V)$ such that its image under the induced homomorphism from retraction
of the space $U\cap V$ to the $[C, C_{2n}]\cup p^{-1}([C_{2n}, C_1])$ is
$$[a_1, a_2][a_3, a_4]\cdots [a_{2n-1}, a_{2n}].$$ 
By \cite[Theorem 1]{Griffiths:commutator},
the commutator length of this
element is  $n$. Since the number
$n$ is arbitrary, $a^*$ does not belong to the
commutator subgroup of $\pi _1(U\cap V)$. 
Therefore the homology class $[a^*] \in H_1(U\cap V)$ is nontrivial
(\ref{equation:H_1}). 
However, its images $i_{U*}([a^*])$ and $i_{V*}([a^*])$ in $H_1(U)$ and
$H_1(V)$, respectively, are obviously trivial. By the exactness of
the Mayer-Vietoris sequence, the group $H_2(AC(X))$ is nontrivial.
Since by Theorem~\ref{thm:1},
$\pi_1(AC(X))$ is trivial,
it follows
by the Hurewicz Theorem that $\pi_2(AC(X))$ is isomorphic to
$H_2(AC(X))$ and hence is also nontrivial.\qed
\medskip

\noindent
{\it Proof}~ of Theorem~\ref{thm:5}. 
Let $\mathbb{H}$ be the well-known Hawaiian earring, which is obviously a
Peano continuum.
Let us show that $SC(\mathbb{H})$ is not homotopy equivalent to
$AC(\mathbb{H}).$ Consider the embedding $\varphi: \mathbb{H} \to
SC(\mathbb{H})$, defined by $\varphi(x) = (A_1, x) \in
SC(\mathbb{H})$ in Figure \ref{fig:fig-1}. In the
sequel we shall
identify $\mathbb{H}$ with its image $\varphi (\mathbb{H}).$

We show that the mapping $\varphi$ is not homotopy equivalent to
the constant one. To show this
by contradiction, suppose that this is
not the case and that there exists
a homotopy $H:X\times I \to
SC(\mathbb{H})$ such that $H(x, 0) = \varphi (x)$ and $H(-, 1)$ is
a constant mapping to some point $\ast$.

Since $SC(\mathbb{H})$ is a path connected space, we may assume
without loss of
generality, that $\ast \in [A, B] \subset
\mathcal{T}.$ Let $a = \inf \{t : H(x^*, t)\in [A, B]\}.$ The
image $H(x^*\times [0, a])$ is a Peano continuum such that
$H(x^*,0) = (A_1, x^*)\in \mathcal{T}\subset SC(X),\ H(x^*, 1)\in
[A, B]\subset \mathcal{T}\subset SC(X).$ Since $[A,B]$ is a
path
connected component of the space $\mathcal{T}$, the set $p\circ
H(x^*\times [0, a])$ is not a subset of $\mathcal{T}$ and there
exists $b\in [0, a]$ for which $p\circ H(x^*, b) \notin
\mathcal{T}$ and consequently $H(x^*,b)\in \II ^2\sm \mathcal{T}$.

Since $b<a$, we have $p\circ H(x^*,[0,b])\cap [A,B] = \emptyset$
and so  $H(x^*,[0,b])\cap [A,B] = \emptyset$. There exists a
neighborhood $D$ of $H(x^*,b)$ which is a disk and is
disjoint from
$\mathcal{T}.$ We have a neighborhood $W$ of $x^*$ for which
$H(W\times [0, b])\subset (SC(X)\setminus [A, B])$ and $H(W\times
b)\subset D.$ Since the disk $D$ is contractible and the space
$\mathbb{H}$ is a retract of $SC(X)\setminus [A,B]$, we get
a
homotopy of $W$ to $\mathbb{H}$ connecting the embedding with a
constant mapping. However, the embedding of $W$ to $\mathbb{H}$ is
essential, which is a contradiction. Now we conclude that $\varphi$ is
an essential mapping.

Next we show that every mapping $\psi:\mathbb{H}\to
AC(\mathbb{H})$ is inessential. First, observe that the embeddings
of $U$ and $V$ to $AC(\mathbb{H})$ are inessential. For instance,
for $U$, we have a deformation retraction of $U$ to $p_y^{-1}(A)$
which can be contracted in $F(\mathbb{H})$ to a
point. We have $\psi
(x^*)\in U$ or $\psi (x^*)\in V$ and we only deal with the case
$\psi (x^*)\in U$.

There exists a neighborhood $\mathcal{O}$ of
$x^*$ such that $\psi (\mathcal{O})\subset U$. All but finitely
many circles of $\mathbb{H}$ are contained in $\mathcal{O}$. Since
$AC(X)$ is simply connected by Theorem~\ref{thm:1}, the
restrictions of $\psi$ to the finitely many circles are inessential
mappings. Hence, as a total, $\psi$ is an inessential map.

Suppose now that the spaces $SC(X)$ and $AC(X)$ are homotopy
equivalent. Then there exist mappings $f:SC(X)\to AC(X)$ and
$g:AC(X)\to SC(X)$ such that the composition $gf$ is homotopic to
the identity mapping. Thus $\varphi \simeq gf\varphi$. However,
$gf\varphi$ is an inessential mapping, because $f\varphi$ is
inessential and $\varphi$ was shown to be an essential mapping.
Contradiction. \qed \\

In view of Theorems~\ref{thm:1},~\ref{thm:2},
and~\ref{thm:3}, it is natural to ask
the following question
(cf. \cite{EKR TA}):

\begin{prb}\label{1}
Does there exist a finite-dimensional noncontractible Peano \break
continuum all homotopy groups of which are trivial?
\end{prb}






\section{Acknowledgements}
This research was supported by the Slovenian Research Agency
grants P1-0292-0101, J1-9643-0101, J1-2057-0101 and J1-4144-0101.
The first author was also
supported by the
Grant-in-Aid for Scientific Research (C) of Japan
No. 20540097 and 23540110.
We thank the referee for comments and suggestions. 

\providecommand{\bysame}{\leavevmode\hbox to3em{\hrulefill}\thinspace}

\end{document}